\newfont{\nset}{msbm10}
\newcommand{\ns}[1]{\mbox{\nset #1}}
\newcommand{\executeiffilenewer}[3]{%
\ifnum\pdfstrcmp{\pdffilemoddate{#1}}%
{\pdffilemoddate{#2}}>0%
{\immediate\write18{#3}}\fi%
}
\newcommand{%
\executeiffilenewer{.svg}{.pdf}%
{inkscape -z -D --file=.svg %
--export-pdf=.pdf --export-latex}%
\input{.pdf_tex}%
}[1]{%
\executeiffilenewer{#1.svg}{#1.pdf}%
{inkscape -z -D --file=#1.svg %
--export-pdf=#1.pdf --export-latex}%
\input{#1.pdf_tex}%
}
\newtheorem{theorem}{Theorem}[section]
\newtheorem{proposition}[theorem]{Proposition}
\newtheorem{definition}[theorem]{Definition}
\DeclareMathOperator{\dgr}{dgr}
\DeclareMathOperator{\tr}{tr}
\DeclareMathOperator{\spec}{sp}
\def\R{\ns R}
\def\vecv{\mbox{\boldmath $v$}}
\def\vec0{\mbox{\boldmath $0$}}
\def\A{\mbox{\boldmath $A$}}
\def\B{\mbox{\boldmath $B$}}
\def\G{\Gamma}
\def\I{\mbox{\boldmath $I$}}
\def\S{\mbox{\boldmath $S$}}
\def\I{\mbox{\boldmath $I$}}
\def\vec0{\mbox{\bf 0}}
\def\ev{\mathop{\rm ev}\nolimits}
\def\tr{\mathop{\rm tr}\nolimits}
\def\G{\Gamma}
\def\Re{\mathbb R}
\begin{document}

\title{A new class of polynomials from the spectrum of a graph, and its application to bound the $k$-independence number}
\author{M. A. Fiol \\
{\small Departament de Matem\`atiques} \\
{\small Universitat Polit\`ecnica de Catalunya, Barcelona, Catalonia} \\
{\small Barcelona Graduate School of Mathematics}\\
{\small{\tt{miguel.angel.fiol@upc.edu}}}}

\maketitle

\begin{abstract}
The $k$-independence number of a graph is the maximum size of a set of vertices at pairwise distance greater than $k$. A graph is called $k$-partially walk-regular if the number of closed walks of a given length $l\le k$, rooted at a vertex $v$, only depends on $l$. In particular, a distance-regular graph is also $k$-partially walk-regular for any $k$.
In this note, we introduce a new family of polynomials obtained from the spectrum of a graph.
These polynomials, together with the interlacing technique, allow us to give tight spectral bounds on the $k$-independence number of a $k$-partially walk-regular graph. Together with some examples where the bounds are tight, we also show that the odd graph $O_{\ell}$ with $\ell$ odd has no $1$-perfect code.
\end{abstract}

\maketitle

\noindent{\em keywords:} Graph,  $k$-independence number, spectrum, interlacing, 
minor polynomial,
$k$-partially walk-regular, 

\noindent{\em Mathematics Subject Classifications:} 05C50, 05C69.

\section{Introduction}
Given a graph $G$, let $\alpha_k = \alpha_k(G)$ denote the size of the largest set of vertices such that any two vertices in the set are at distance larger than $k$. Thus, with this notation, $\alpha_1$ is just the independence number $\alpha$ of a graph.
The parameter $\alpha_k(G)$ therefore represents the largest number of vertices which can be $k+1$ spread out in $G$. It is known that determining $\alpha_k$ is NP-Hard in general \cite{kZ1993}.

The $k$-independence number of a graph is directly related to other combinatorial parameters such as the average distance \cite{FH1997}, packing chromatic number \cite{GHHHR2008}, injective chromatic number \cite{HkSS2002}, and strong chromatic index \cite{M2000}. Upper bounds on the $k$-independence number directly give lower bounds on the corresponding distance or packing chromatic number \cite{AM2002}, as well as necessary conditions for the existence of perfect codes.

In this note we generalize and improve the known spectral upper bounds for the $k$-independence number from \cite{Fiolkindep},  \cite{act16} and \cite{acf19}. For some cases, we also show that our bounds are sharp.                                                                                                               Let $G=(V,E)$ be a graph with $n=|V|$ vertices, $m=|E|$ edges, and adjacency matrix $\A$ with  spectrum
$
\spec G=\{\theta_0,\theta_1^{m_1},\ldots,\theta_d^{m_d}\}.
$
where the different eigenvalues are in decreasing order, $\theta_0>\theta_1>\cdots>\theta_d$, and the superscripts stand for their multiplicities.
When the eigenvalues are presented with possible repetitions, we shall indicate them by
$
\ev G:  \lambda_1 \geq \lambda_2 \geq \cdots\geq \lambda_n.
$

The first known spectral bound for the independence number $\alpha$ of a graph is due to Cvetkovi\'c \cite{c71}.

\begin{theorem}[Cvetkovi\'c \cite{c71}]
\label{thm:cvetkovic}
Let $G$ be a graph with eigenvalues $\lambda_1\ge \cdots \ge \lambda_n$. Then,
$$
\alpha\le \min \{|\{i : \lambda_i\ge 0\}| , |\{i : \lambda_i\le 0\}|\}.
$$
\end{theorem}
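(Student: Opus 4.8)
The plan is to derive the bound from Cauchy interlacing applied to a principal submatrix of the adjacency matrix. Let $S\subseteq V$ be an independent set with $|S|=\alpha$. Since $S$ spans no edges, the principal submatrix $\A[S]$ of $\A$ indexed by the vertices of $S$ is the $\alpha\times\alpha$ zero matrix, so all of its eigenvalues are equal to $0$.

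Next I would invoke the interlacing theorem for principal submatrices: if $\B$ is an $m\times m$ principal submatrix of a symmetric $n\times n$ matrix $\A$ with eigenvalues $\lambda_1\ge\cdots\ge\lambda_n$, and $\B$ has eigenvalues $\mu_1\ge\cdots\ge\mu_m$, then $\lambda_i\ge\mu_i\ge\lambda_{i+n-m}$ for $1\le i\le m$. Applying this with $\B=\A[S]$, $m=\alpha$, and $\mu_1=\cdots=\mu_\alpha=0$, the left inequalities give $\lambda_i\ge 0$ for $i=1,\dots,\alpha$, hence $|\{i:\lambda_i\ge 0\}|\ge\alpha$; the right inequalities give $0\ge\lambda_{i+n-\alpha}$ for $i=1,\dots,\alpha$, i.e.\ $\lambda_{n-\alpha+1},\dots,\lambda_n\le 0$, hence $|\{i:\lambda_i\le 0\}|\ge\alpha$. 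Taking the minimum of the two counts yields the claimed inequality.

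There is no real obstacle here; the only point deserving a little care is to squeeze \emph{both} halves of the minimum out of the two-sided interlacing estimate rather than just one. An equivalent route is through the Courant--Fischer characterization: restricting the Rayleigh quotient $\x^{\top}\A\x/\x^{\top}\x$ to the $\alpha$-dimensional coordinate subspace spanned by $S$ makes it vanish identically, which simultaneously forces at least $\alpha$ of the $\lambda_i$ to be nonnegative (using this subspace as a test space from the top of the spectrum) and at least $\alpha$ of them to be nonpositive (from the bottom). This is the form of the argument that will be generalized later in the note by replacing $\A$ with a suitable polynomial $p(\A)$ and $S$ with a $k$-independent set, so that $p(\A)[S]$ is again controlled; the present proof is the $k=1$, $p=\mathrm{id}$ base case.
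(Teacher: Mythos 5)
Your proof is correct: since an independent set $S$ of size $\alpha$ induces the zero principal submatrix $\A[S]$, whose $\alpha$ eigenvalues are all $0$, the two-sided interlacing inequalities $\lambda_i\ge \mu_i\ge \lambda_{n-\alpha+i}$ immediately give at least $\alpha$ nonnegative and at least $\alpha$ nonpositive eigenvalues of $\A$, which is exactly the claimed bound. The paper itself does not prove this statement (it is quoted from Cvetkovi\'c's 1971 paper), but your argument is the standard one and uses precisely the tool the paper sets up in its background section, namely Theorem \ref{theo-interlacing} in its principal-submatrix (induced subgraph) specialization, so it is fully in line with the paper's framework.
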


Another well-known result is the following bound due to Hoffman (unpublished; see for instance Haemers  \cite{h95}).

\begin{theorem}[Hoffman \cite{h95}]
\label{thm:hoffman}
If $G$ is a regular graph on $n$ vertices with eigenvalues $\lambda_1\ge \cdots \ge \lambda_n$, then
\[
\alpha \leq n\frac{-\lambda_n }{\lambda_1 - \lambda_n}.
\]
\end{theorem}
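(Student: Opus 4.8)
\medskip
\noindent\textbf{Proof strategy.}
The plan is to use eigenvalue interlacing on a two-part vertex partition, the classical route that also fits the interlacing technique developed in this note. Assume $G$ is $k$-regular with at least one edge, so that $\lambda_1=k$ and, since $\sum_i\lambda_i=\tr\A=0$, also $\lambda_n<0$ and $\lambda_1-\lambda_n>0$. Let $U\subseteq V$ be an independent set with $|U|=\alpha$, and consider the partition $\{U,\,V\setminus U\}$ together with its quotient matrix $\B$ of average row sums. Because $U$ induces no edges, the $(1,1)$-entry of $\B$ is $0$; by $k$-regularity every vertex of $U$ sends all its $k$ edges into $V\setminus U$, so the $(1,2)$-entry is $k$; counting the $k\alpha$ edges leaving $U$ gives $(2,1)$-entry $\frac{k\alpha}{n-\alpha}$, and the remaining entry is $k-\frac{k\alpha}{n-\alpha}$.

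Next I would read off the spectrum of the $2\times2$ matrix $\B$. Both its row sums equal $k$, so $\j$ is an eigenvector with eigenvalue $k$; since $\tr\B=k-\frac{k\alpha}{n-\alpha}$, the other eigenvalue is $\mu=-\frac{k\alpha}{n-\alpha}\le 0\le k$, hence $\mu$ is the smaller one. Applying the interlacing theorem for quotient matrices of a symmetric matrix, the smallest eigenvalue of $\B$ is bounded below by $\lambda_n$, i.e. $\lambda_n\le -\frac{k\alpha}{n-\alpha}$. Clearing denominators turns this into $\alpha(k-\lambda_n)\le -n\lambda_n$, and since $k=\lambda_1$ we obtain $\alpha\le n\frac{-\lambda_n}{\lambda_1-\lambda_n}$, as claimed.

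A fully equivalent alternative avoids the interlacing black box: let $\x$ be the characteristic vector of $U$ and write $\x=\frac{\alpha}{n}\j+\y$ with $\y\perp\j$. Since $U$ is independent, $\x^{\top}\!\A\x=0$; expanding and using $\A\j=k\j$ gives $0=\frac{k\alpha^2}{n}+\y^{\top}\!\A\y\ge\frac{k\alpha^2}{n}+\lambda_n\|\y\|^2$, and with $\|\y\|^2=\|\x\|^2-\frac{\alpha^2}{n}=\frac{\alpha(n-\alpha)}{n}$ this again yields the same inequality. I do not expect a genuine obstacle; the single point to keep in mind is that regularity is used twice — to identify $\lambda_1=k$ and to make the off-diagonal entry of $\B$ exactly $k$ — so the bound really requires $G$ regular, and for non-regular graphs one must instead pass to a weighted (ratio-type) version.
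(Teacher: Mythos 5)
Your proof is correct, and your main argument---interlacing the quotient matrix of $\A$ for the partition $\{U,V\setminus U\}$---is essentially the paper's route: Theorem~\ref{thm:hoffman} is stated there as background without proof, but it is recovered as the case $k=1$ of Theorem~\ref{new-theo} (equation~\eqref{eq-k=1}), whose proof interlaces the quotient matrix of $p_1(\A)$ with $p_1(x)=\frac{x-\theta_d}{\theta_0-\theta_d}$, i.e.\ just an affine rescaling of your $\B$. Your Rayleigh-quotient alternative is also valid, and regularity is indeed used exactly where you flag it.
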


Regarding the $k$-independence number, the following three results are known. The first is due to Fiol \cite{Fiolkindep} and requires a preliminary definition. Let $G$ be a graph with distinct eigenvalues $\theta_0 > \cdots > \theta_d$. Let $P_k(x)$ be chosen among all polynomials $p(x) \in \Re_k(x)$, that is, polynomials of real coefficients and degree at most $k$, satisfying $|p(\theta_i)| \leq 1$ for all $i = 1,...,d$, and such that $P_k(\theta_0)$ is maximized. The polynomial $P_k(x)$ defined above is called the {\em $k$-alternating polynomial} of $G$ and  was shown to be unique in \cite{fgy96}, where it was used to study the relationship between the spectrum of a graph and its diameter.

\begin{theorem}[Fiol \cite{Fiolkindep}]
\label{thm:fiol}
Let $G$ be a $d$-regular graph on $n$ vertices, with distinct eigenvalues $\theta_0 >\cdots > \theta_d$ and let $P_k(x)$ be its $k$-alternating polynomial. Then,
\[
\alpha_k \leq \frac{2n}{P_k(\theta_0) + 1}.
\]
\end{theorem}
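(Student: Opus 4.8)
The plan is to run, at the level of the matrix $P_k(\A)$, the eigenvalue-interlacing argument that underlies Hoffman's bound (Theorem~\ref{thm:hoffman}). Write $\theta_0$ also for the degree of $G$, so $\A\j=\theta_0\j$; let $E_0=\tfrac1n\J,E_1,\dots,E_d$ be the primitive idempotents of $\A$, and fix a set $U\subseteq V$ with $|U|=\alpha_k$ whose vertices are pairwise at distance $>k$. The first step is the combinatorial observation that, since $\deg P_k\le k$, the entry $(P_k(\A))_{uv}$ is a linear combination of the numbers $(\A^{\ell})_{uv}$ with $\ell\le k$, each of which counts walks of length $\ell$ from $u$ to $v$ and hence vanishes whenever $\dist(u,v)>k$. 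Therefore the principal submatrix of $P_k(\A)$ indexed by $U$ is a \emph{diagonal} matrix, whose diagonal entries I denote $a_u:=(P_k(\A))_{uu}$.

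Next I would apply interlacing. As $p\equiv 1$ is admissible in the definition of $P_k$, one has $P_k(\theta_0)\ge 1$, while $|P_k(\theta_i)|\le 1$ for $i\ge 1$; hence $P_k(\theta_0)=\max_i P_k(\theta_i)$ is the largest eigenvalue of $P_k(\A)$ (with eigenvector $\j$), its smallest eigenvalue is $\ge -1$, and $P_k(\A)$ has constant row sum $P_k(\theta_0)$. Let $B$ be the quotient matrix of $P_k(\A)$ for the partition $\{U,\ V\setminus U\}$. By the diagonality of the $U$-block together with the constant-row-sum property, $B$ is a $2\times 2$ matrix with $(U,U)$-entry $\bar a:=\tfrac1{\alpha_k}\sum_{u\in U}a_u$ and both row sums equal to $P_k(\theta_0)$; its two eigenvalues are then $P_k(\theta_0)$ and $\bar a-\tfrac{\alpha_k(P_k(\theta_0)-\bar a)}{n-\alpha_k}$. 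Cauchy interlacing forces the latter to be at least the smallest eigenvalue of $P_k(\A)$, hence at least $-1$, and rearranging this yields
$$
\alpha_k\ \le\ \frac{n(\bar a+1)}{P_k(\theta_0)+1}.
$$
Equivalently, and perhaps more transparently, one can evaluate the Rayleigh quotient of $P_k(\A)$ at the characteristic vector $\vecu$ of $U$: on one side it equals $\sum_{u\in U}a_u$ by diagonality, and on the other, writing $\vecu=\sum_i E_i\vecu$ with $E_0\vecu=\tfrac{\alpha_k}{n}\j$ and using $|P_k(\theta_i)|\le 1$ for $i\ge 1$, it is $\ge (P_k(\theta_0)+1)\tfrac{\alpha_k^2}{n}-\alpha_k$; comparing the two gives the same inequality.

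To finish, one needs $\bar a\le 1$, and this is the step I expect to be the real obstacle. Each $a_u=\sum_i P_k(\theta_i)(E_i)_{uu}$ is a convex combination of the $P_k(\theta_i)$ (the weights $(E_i)_{uu}$ are nonnegative and sum to $1$), in which the weight on $P_k(\theta_0)$ is exactly $(E_0)_{uu}=\tfrac1n$ because $G$ is regular; but estimating the remaining weights trivially only gives $a_u\le 1+\tfrac{P_k(\theta_0)-1}{n}$, which falls just short. What is genuinely required is finer control of the closed-walk counts $(\A^{\ell})_{uu}$ for $\ell\le k$ — equivalently, of $\tr P_k(\A)$ — and this is precisely where a walk-regular-type hypothesis enters: if $G$ is walk-regular (in particular if it is distance-regular), then $a_u=\tfrac1n\tr P_k(\A)=\tfrac1n\sum_i m_i P_k(\theta_i)$ is the same for every vertex, and the theorem reduces to the scalar inequality $\sum_i m_i P_k(\theta_i)\le n$ for the $k$-alternating polynomial. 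Proving that last inequality — I would try to exploit that $P_k$ equioscillates between $+1$ and $-1$ on a suitable subset of $\{\theta_1,\dots,\theta_d\}$, which creates cancellation in $\sum_{i\ge 1}m_i P_k(\theta_i)$ — is the delicate part; the rest is the routine interlacing computation above.
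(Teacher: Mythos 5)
Your interlacing computation is sound as far as it goes, and it is essentially the same quotient-matrix calculation that the paper uses to prove Theorem~\ref{new-theo}: with the partition $\{U,V\setminus U\}$ you correctly obtain $\alpha_k\le n(\bar a+1)/(P_k(\theta_0)+1)$, where $\bar a$ is the average diagonal entry of $P_k(\A)$ over the $k$-independent set. But note that this paper never proves Theorem~\ref{thm:fiol} at all — it is quoted from \cite{Fiolkindep} — so the only thing to assess is whether your argument closes, and it does not. The step you flag as "the real obstacle", namely $\bar a\le 1$, is exactly the content of the theorem beyond routine interlacing, and you leave it unproved. The trivial estimate you mention gives only $a_u\le 1+(P_k(\theta_0)-1)/n$, hence $\alpha_k\le 1+2(n-1)/(P_k(\theta_0)+1)$, which is strictly weaker than the claimed bound whenever $P_k(\theta_0)>1$; Cauchy interlacing on the diagonal $U$-block improves this only marginally. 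So what you have is a proof of a weaker inequality, not of the statement.

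The fallback you propose does not repair this. Assuming walk-regularity is a genuine strengthening of the hypothesis (the theorem is stated for regular graphs), so even if it worked you would be proving a different statement; and under that assumption the problem does reduce to $\tr P_k(\A)=\sum_i m_iP_k(\theta_i)\le n$, but you offer no proof of this inequality — the remark about equioscillation of $P_k$ is a hope, not an argument, and the inequality is far from obvious since the positive values of $P_k$ at interior eigenvalues may carry large multiplicities. It is telling that the paper itself introduces $k$-partial walk-regularity in Theorem~\ref{new-theo} precisely to control these diagonal entries, and that, using your polynomial $p=(P_k+1)/(P_k(\theta_0)+1)$, it recovers the bound $2n/(P_k(\theta_0)+1)$ only for $r$-antipodal distance-regular graphs with $k=d-1$, where $\tr P_{d-1}(\A)=n$ can be computed explicitly from the identity $p_d=H-\frac r2P_{d-1}+\frac r2-1$. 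In short: the routine part of your proposal is correct and parallels the paper's technique, but the decisive inequality (control of the diagonal of $P_k(\A)$, equivalently of its trace) is missing, so the proposal does not prove Theorem~\ref{thm:fiol}; for the actual argument one must go back to \cite{Fiolkindep}.
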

More recently, Cvetkovi\'c-like and Hoffman-like bounds were given by Abiad, Cioab\u{a}, and Tait in \cite{act16}. 	
\begin{theorem}[Abiad, Cioab\u{a},  Tait \cite{act16}]
\label{previous1act}
Let $G$ be a graph on $n$ vertices with adjacency matrix $\A$, with eigenvalues $\lambda_1 \geq \cdots \geq \lambda_n$. Let $w_k$ and $W_k$ be respectively the smallest and the largest diagonal entries of $\A^k$. Then,
\[
\alpha_k \leq \min\{|\{i : \lambda_i^k \geq w_k(G)\}| ,  |\{i : \lambda_i^k \leq W_k(G)\}|\}.
\]
\end{theorem}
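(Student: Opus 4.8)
The plan is to combine two classical ingredients: the combinatorial meaning of the entries of $\A^k$ and eigenvalue interlacing (Cauchy's interlacing theorem) for principal submatrices. First I would fix a set $S\subseteq V$ of vertices that are pairwise at distance greater than $k$, chosen so that $|S|=\alpha_k$, and consider the principal submatrix $\B=(\A^k)[S]$ of $\A^k$ indexed by $S$. For distinct $u,v\in S$ the entry $(\A^k)_{uv}$ counts the walks of length exactly $k$ from $u$ to $v$; since $\dist(u,v)>k$ there is no such walk, so $(\A^k)_{uv}=0$. Hence $\B$ is a diagonal matrix whose diagonal entries are the numbers $(\A^k)_{vv}$, $v\in S$, each of which lies in the interval $[w_k,W_k]$ by definition of $w_k$ and $W_k$. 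In particular the multiset of eigenvalues of $\B$ equals $\{(\A^k)_{vv}:v\in S\}\subseteq[w_k,W_k]$.

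Next I would write $\mu_1\ge\mu_2\ge\cdots\ge\mu_n$ for the eigenvalues of $\A^k$ arranged in nonincreasing order; as a multiset these are $\{\lambda_1^k,\ldots,\lambda_n^k\}$, although this ordering need not coincide with $\lambda_1^k\ge\cdots\ge\lambda_n^k$ when $k$ is even. Denoting the eigenvalues of $\B$ by $d_1\ge\cdots\ge d_t$ with $t=\alpha_k$, Cauchy's interlacing theorem applied to the principal submatrix $\B$ of the symmetric matrix $\A^k$ gives $\mu_i\ge d_i\ge\mu_{\,n-t+i}$ for every $i=1,\ldots,t$.

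From the left-hand inequalities, taking $i=t$ yields $\mu_t\ge d_t\ge w_k$, so $\mu_1,\ldots,\mu_t$ are all $\ge w_k$, whence $|\{i:\lambda_i^k\ge w_k\}|\ge t=\alpha_k$. From the right-hand inequalities, taking $i=1$ yields $\mu_{\,n-t+1}\le d_1\le W_k$, so $\mu_{\,n-t+1},\ldots,\mu_n$ are all $\le W_k$, whence $|\{i:\lambda_i^k\le W_k\}|\ge t=\alpha_k$. Taking the minimum of the two counts gives the claimed inequality.

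The argument is short, and the only point that requires care — the one I would flag as the main (if minor) obstacle — is the bookkeeping with the ordering of the eigenvalues of $\A^k$ versus those of $\A$: one must not assume that $\lambda_i^k$ is the $i$-th largest eigenvalue of $\A^k$ (which fails for even $k$), so the interlacing inequalities should be written in terms of the sorted spectrum $\mu_1\ge\cdots\ge\mu_n$ of $\A^k$ and only afterwards translated into a count over the multiset $\{\lambda_i^k\}$, which is all the final step needs. Everything else — the vanishing of the off-diagonal entries of $(\A^k)[S]$ and the invocation of interlacing — is routine.
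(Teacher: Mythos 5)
Your argument is correct: the off-diagonal entries of the principal submatrix $(\A^k)[S]$ vanish because vertices of a $k$-independent set admit no walks of length $k$ between them, and Cauchy interlacing applied to this diagonal submatrix of $\A^k$ gives exactly the two counts, with the ordering subtlety for even $k$ handled properly by working with the sorted spectrum of $\A^k$ rather than with $\lambda_1^k\ge\cdots\ge\lambda_n^k$. The paper itself states this theorem without proof, citing \cite{act16}, and your proof is essentially the original argument there (and an instance of the principal-submatrix case of Theorem~\ref{theo-interlacing}), so there is nothing to add.
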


\begin{theorem}[Abiad, Cioab\u{a},  Tait \cite{act16}]
\label{thm:abiad}
Let $G$ be a $\delta$-regular graph on $n$ vertices with adjacency matrix $\A$, whose distinct eigenvalues are $\theta_0(=\delta) > \cdots> \theta_d$. Let $\widetilde{W_k}$ be the largest diagonal entry of $\A+\A^2+\cdots+\A^k$. Let $\theta = \max\{|\theta_1| , |\theta_d|\}$. Then,
\[
\label{aida}
\alpha_k \leq n \frac{\widetilde{W_k}+ \sum_{j = 1}^k \theta^j}{\sum_{j = 1}^k \delta^j + \sum_{j = 1}^k\theta^j}.
\]
\end{theorem}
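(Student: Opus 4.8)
The plan is to adapt the classical interlacing proof of Hoffman's bound (Theorem~\ref{thm:hoffman}) to the matrix $\M=\A+\A^2+\cdots+\A^k$. First I would fix a set $S\subseteq V$ of vertices at pairwise distance greater than $k$ with $|S|=\alpha_k$. The reason for working with $\M$ is the observation that if $u,v\in S$ are distinct, then $\dist(u,v)>k$ forces $(\A^j)_{uv}=0$ for every $j=1,\dots,k$, hence $\M_{uv}=0$; so the principal submatrix of $\M$ indexed by $S$ is \emph{diagonal}, its entries being the numbers of closed walks of length at most $k$ rooted at the vertices of $S$, each of which is at most $\widetilde{W_k}$ by definition. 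Since $G$ is $\delta$-regular, $\M$ is symmetric with constant row sum $\sum_{j=1}^k\delta^j$ and eigenvalues $\sum_{j=1}^k\theta_i^j$ for $i=0,\dots,d$. For $i\ge 1$ one has $\bigl|\sum_{j=1}^k\theta_i^j\bigr|\le\sum_{j=1}^k|\theta_i|^j\le\sum_{j=1}^k\theta^j$ because $|\theta_i|\le\theta$, while the eigenvalue for $i=0$ is positive; hence the least eigenvalue of $\M$ is at least $-\sum_{j=1}^k\theta^j$.

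Next I would apply eigenvalue interlacing to the quotient matrix $\B$ of $\M$ associated with the partition $\{S,V\setminus S\}$. Writing $s=\alpha_k$ and $r=\sum_{j=1}^k\delta^j$, the top-left entry of $\B$ is $b_{11}=\tfrac1s\sum_{u\in S}\M_{uu}\le\widetilde{W_k}$ (this is where the vanishing of the off-diagonal block inside $S$ enters), and the constant row sums together with the symmetry of $\M$ determine the remaining entries of $\B$ and show that its two eigenvalues are $r$ and $\mu=\tfrac{n}{n-s}b_{11}-\tfrac{s}{n-s}r$. Since $b_{11}\le\widetilde{W_k}\le r$, a short computation gives $\mu\le r$, so $\mu$ is the smaller eigenvalue of $\B$; interlacing then yields $\mu\ge\lambda_{\min}(\M)\ge-\sum_{j=1}^k\theta^j$. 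Multiplying this inequality by $n-s>0$ and collecting the terms containing $s$ gives $\alpha_k=s\le n\bigl(b_{11}+\sum_{j=1}^k\theta^j\bigr)\big/\bigl(\sum_{j=1}^k\delta^j+\sum_{j=1}^k\theta^j\bigr)$, and replacing $b_{11}$ by the larger quantity $\widetilde{W_k}$ produces exactly the stated bound.

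Once the matrix $\M$ has been chosen, the argument is essentially routine; the points requiring care are purely bookkeeping ones, namely checking that every inequality points in the right direction — in particular that $\mu$, not $r$, is the eigenvalue of $\B$ that interlacing bounds from below — and that the denominator $\sum_{j=1}^k\delta^j+\sum_{j=1}^k\theta^j$ is positive, so that the final division is valid. The genuine idea, and the one the rest of the paper sharpens, is the choice of the matrix to be interlaced: here it is the somewhat crude $\A+\A^2+\cdots+\A^k$, whereas the improved bounds replace it by $p(\A)$ for a polynomial $p$ adapted to the spectrum, which avoids the lossy estimate $\sum_{j=1}^k\theta_i^j\ge-\sum_{j=1}^k\theta^j$ and also optimizes the diagonal contribution.
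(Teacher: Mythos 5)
Your argument is correct: the vanishing of the off-diagonal entries of $\A+\A^2+\cdots+\A^k$ on the $k$-independent set, the quotient matrix for the partition $\{S,V\setminus S\}$, the identification of its second eigenvalue, and the interlacing bound against $-\sum_{j=1}^k\theta^j$ all check out, including the needed verifications that $\mu\le r$ and that the denominator is positive. This is essentially the same quotient-matrix interlacing scheme the paper itself uses (for Theorem~\ref{new-theo}, there with a general polynomial $p(\A)$ in place of $\A+\cdots+\A^k$) and the same route as the cited original proof, so no further comment is needed.
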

	
Finally, as a consequence of a generalization of the last two theorems,
Abiad, Coutinho, and the author \cite{acf19}, proved the following results.

\begin{theorem}[Abiad, Coutinho, Fiol \cite{acf19}]
\label{theo-gen-k}
Let $G$ be a $\delta$-regular graph with $n$ vertices and distinct eigenvalues $\theta_0(=\delta)>\theta_1> \cdots > \theta_d$. Let $W_k=W(p)=\max_{u\in V}\{\sum_{i=1}^k(\A^k)_{uu}\}$.
Then, the $k$-independence number of $G$ satisfies the following:
\begin{itemize}
\item[$(i)$]
If $k=2$, then
\begin{equation*}
\alpha_2\le n\frac{\theta_0+\theta_i\theta_{i-1}}{(\theta_0-\theta_i)
(\theta_0-\theta_{i-1})},
\end{equation*}
where $\theta_i$ is the largest eigenvalue not greater than $-1$.
\item[$(ii)$]
If $k>2$ is odd, then
\begin{equation*}
\label{eq-k}
\alpha_k(G)\le n\frac{W_k-\sum_{j=0}^k \theta_d^j}{\sum_{j=0}^k \delta^j-\sum_{j=0}^k \theta_d^j}.
\end{equation*}
\item[$(iii)$]
If $k>2$ is even, then
\begin{equation*}
\alpha_k(G)\le n\frac{W_k+1/2}{\sum_{j=0}^k \delta^j+1/2}.
\end{equation*}
\item[$(iv)$]
If $G=(V,E)$ is a walk-regular graph, then
\begin{equation*}
\alpha_k(G)\le n\frac{1-\lambda(q_k)}{q_k(\delta)-\lambda(q_k)}
\label{coro-walk-reg}
\end{equation*}
for $k=0,\ldots,d-1$, where $q_k=p_0+\cdots+p_k$ with the $p_i$'s being the predistance polynomials of $G$ (see next section), and $\lambda(q_k)=\min_{i\in [2,d]}\{q_k(\theta_i)\}$.
\end{itemize}
\end{theorem}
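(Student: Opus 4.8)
\noindent The plan is to derive all four parts from a single interlacing inequality applied to the matrix $p(\A)$, where $p$ is a polynomial of degree at most $k$ chosen separately in each case.

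First I would establish the master inequality. Let $U$ be a set of vertices at pairwise distance greater than $k$ with $|U|=\alpha_k$, fix a polynomial $p$ of degree at most $k$, and put $M=p(\A)$. For $u\ne v$ in $U$ one has $M_{uv}=0$: indeed $M_{uv}$ is a linear combination of the entries $(\A^j)_{uv}$ with $0\le j\le k$, each of which counts walks of length $j$ from $u$ to $v$ and hence vanishes since $\dist(u,v)>k\ge j$. Thus the principal submatrix of $M$ indexed by $U$ is the diagonal matrix $\diag\big((p(\A))_{uu}\big)_{u\in U}$. I would then pass to the partition $\{U,\,V\setminus U\}$ of $V$ and its associated $2\times 2$ quotient matrix $B$ of $M$. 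Since $G$ is $\delta$-regular, every row of $\A$ sums to $\delta$, so every row of $M$, and hence of $B$, sums to $\sigma:=p(\delta)=p(\theta_0)$; in particular $\sigma$ is an eigenvalue of $B$. Writing $s=\sum_{u\in U}(p(\A))_{uu}$, a short computation of the four blocks of $B$ gives its other eigenvalue as
\[
\mu=\frac{ns-\alpha_k^2\sigma}{\alpha_k(n-\alpha_k)} .
\]
By eigenvalue interlacing for quotient matrices, both eigenvalues of $B$ are at least $\lambda_n(M)=\min_i p(\theta_i)$, so $\mu\ge\lambda$ for any real $\lambda\le\min_i p(\theta_i)$. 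Using also $s\le\alpha_k\,W(p)$, where $W(p)=\max_{u\in V}(p(\A))_{uu}$, and rearranging, I obtain the master bound
\[
\alpha_k\ \le\ n\,\frac{W(p)-\lambda}{p(\theta_0)-\lambda}\qquad\text{whenever } p(\theta_0)>\lambda .
\]

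Next I would specialise $p$ in each case and evaluate $W(p)$ and an admissible $\lambda$. For $(i)$, with $k=2$, take $p(x)=(x-\theta_i)(x-\theta_{i-1})$, $\theta_i$ being the largest eigenvalue $\le-1$; since $(\A)_{uu}=0$ and $(\A^2)_{uu}=\delta=\theta_0$ in a $\delta$-regular graph, $W(p)=\theta_0+\theta_i\theta_{i-1}$, and because $\theta_i$ and $\theta_{i-1}$ are consecutive eigenvalues no eigenvalue lies strictly between them, so $p\ge0$ on the spectrum and $\lambda=0$ is admissible. For $(ii)$ and $(iii)$, with $k\ge3$, take $p(x)=\sum_{j=0}^k x^j$, so $W(p)=W_k$ and $p(\theta_0)=\sum_{j=0}^k\delta^j$; for $k$ odd I would verify that $p$ is nondecreasing on $\mathbb{R}$ (writing $p(x)=\frac{x^{k+1}-1}{x-1}$, the numerator of $p'(x)$ equals $kx^{k+1}-(k+1)x^k+1\ge0$), so that $\min_i p(\theta_i)=p(\theta_d)=\sum_{j=0}^k\theta_d^j$; for $k$ even $p$ has no real root, hence $p>0$ on $\mathbb{R}$, and one may take $\lambda=-\tfrac12$ (the bound then stays below $n$ since $W_k\le p(\theta_0)$). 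For $(iv)$, if $G$ is walk-regular, take $p=q_k=p_0+\cdots+p_k$: walk-regularity makes $(q_k(\A))_{uu}$ independent of $u$, equal to $\tfrac1n\tr q_k(\A)=\tfrac1n\sum_i m_i q_k(\theta_i)$, which by orthogonality of the predistance polynomials (and $p_0=1$, $\|p_0\|^2=p_0(\theta_0)=1$) equals $1$; hence $W(q_k)=1$, and with $\lambda=\lambda(q_k)=\min_i q_k(\theta_i)$ and $p(\theta_0)=q_k(\delta)$ the master bound is exactly $(iv)$.

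The routine part is the first step — a standard equitable-partition interlacing computation — so the hard part will be the case-by-case analysis: identifying $W(p)$ and, above all, pinning down a usable $\lambda\le\min_i p(\theta_i)$, together with checking (to keep the bound at most $n$) that $p(\theta_0)=\max_i p(\theta_i)$. For $(ii)$ this comes for free from monotonicity; for $(i)$ it rests on the elementary fact that consecutive distinct eigenvalues have nothing between them; and for $(iv)$ it is precisely the orthogonality of the predistance polynomials, which is also where the walk-regularity hypothesis is used. The even case $(iii)$ is the crudest: the constant $\tfrac12$ is merely a convenient explicit lower bound for $\min_i p(\theta_i)$, and improving this is exactly what the new polynomials introduced in this paper are for.
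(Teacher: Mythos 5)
Your overall route is the one this paper itself uses: the statement is quoted here from \cite{acf19} without proof, but the paper's own Theorem~\ref{new-theo} is proved by exactly your device --- the $2\times 2$ quotient matrix of $p(\A)$ with respect to the partition $\{U,V\setminus U\}$, eigenvalue interlacing, and solving the resulting inequality for $|U|$ --- and your master bound $\alpha_k\le n\,\frac{W(p)-\lambda}{p(\theta_0)-\lambda}$, valid for any $\lambda$ not exceeding the least eigenvalue of $p(\A)$, is derived correctly. The specializations (i)--(iii) are also correct: in (i) the diagonal of $p(\A)$ is constantly $\theta_0+\theta_i\theta_{i-1}$ and $p\ge 0$ on the spectrum because $\theta_{i-1},\theta_i$ are consecutive; in (ii) the claim $kx^{k+1}-(k+1)x^k+1\ge 0$ does hold for odd $k$ (its derivative $k(k+1)x^{k-1}(x-1)$ is nonpositive for $x\le 1$ and nonnegative for $x\ge 1$, and the value at $x=1$ is $0$), though you should include that one-line check; in (iii) the choice $\lambda=-\tfrac12$ is legitimate since $1+x+\cdots+x^k>0$ on $\Re$ for even $k$, and it reproduces the stated bound verbatim. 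You also, reasonably, read the garbled definition of $W_k$ as the largest diagonal entry of $\sum_{j=0}^k\A^j$, the only reading compatible with (ii)--(iii).

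The one genuine gap is in (iv). The statement defines $\lambda(q_k)=\min_{i\in[2,d]}q_k(\theta_i)$, i.e.\ the minimum omits $\theta_1$, whereas your interlacing argument only licenses $\lambda\le\min_{0\le i\le d}q_k(\theta_i)$ (or $\min_{1\le i\le d}$ after deflating $\theta_0$, which is available because the all-ones vector lies in the column space of the characteristic matrix). A minimum over the smaller index set $[2,d]$ is in general larger, so the inequality you obtain is a priori weaker than the one claimed: if $q_k(\theta_1)$ were strictly smaller than $\min_{i\in[2,d]}q_k(\theta_i)$, your admissible $\lambda$ would be $q_k(\theta_1)$ and the stated bound would not follow. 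Your write-up silently identifies ``$\min_i q_k(\theta_i)$'' with $\lambda(q_k)$ and never addresses $\theta_1$. To prove (iv) exactly as written you would need the extra step that neither $q_k(\theta_0)$ nor $q_k(\theta_1)$ lies below $\min_{i\in[2,d]}q_k(\theta_i)$ (the $\theta_1$ part being the substantive one, and it is a property of the sums of predistance polynomials, not of general $p$), or else a different argument controlling the second quotient eigenvalue by the values of $q_k$ on $\{\theta_2,\ldots,\theta_d\}$ only. Everything else in the proposal stands, and the discrepancy is confined to this index range, but as a proof of the statement as given this step is missing.
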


\section{Some Background}
For basic notation and results see \cite{biggs,g93}. Let $G=(V,E)$ be a (simple) graph with $n=|V|$ vertices, $m=|E|$ edges, and adjacency matrix $\A$ with  spectrum $\spec G=\{\theta_0,\theta_1^{m_1},\ldots,\theta_d^{m_d}\}$. When the eigenvalues are presented with possible repetitions, we shall indicate them by $\lambda_1 \geq \lambda_2 \geq \cdots\geq \lambda_n$.
Let us consider the scalar product in $\Re_d[x]$:
$$
\langle f,g\rangle_G=\frac{1}{n}\tr(f(\A)g(\A))=\frac{1}{n}\sum_{i=0}^{d} m_i f(\theta_i)g(\theta_i).
$$
The so-called {\em predistance polynomials} $p_0(=1),p_1,\ldots, p_d$ are a sequence of orthogonal polynomials with respect to the above product, with  $\dgr p_i=i$, and they are normalized in such a way that $\|p_i\|_G^2=p_i(\theta_0)$ (this makes sense since it is known that $p_i(\theta_0)>0$) for $i=0,\ldots,d$. Therefore they are uniquely determined, for instance, following the Gram-Schmidt process. These polynomials were introduced by Fiol and Garriga in \cite{fg97} to prove the so-called `spectral excess theorem' for distance-regular graphs, where $p_0(=1),p_1,\ldots, p_d$ coincide with the so-called distance polynomials .
See \cite{cffg09} for further details and applications.

A graph $G$ is called {\em $k$-partially walk-regular}, for some integer $k\ge 0$, if the number of closed walks of a given length $l\le k$, rooted at a vertex $v$, only depends on $l$. Thus, every (simple) graph is $k$-partially walk-regular for $k=0,1$, and every regular graph is $2$-partially walk-regular. Moreover $G$ is $k$-partially walk-regular for any $k$ if and only if $G$ is walk-regular, a concept introduced by Godsil and Mckay in \cite{gm80}.  For example, it is well-known that every distance-regular graph is walk-regular (but the converse does not hold).

Eigenvalue interlacing is a powerful and old technique that has found countless applications in combinatorics and other fields. This technique will be used in several of our proofs. For more details, historical remarks, and other applications, see Fiol and Haemers \cite{f99,h95}.
Given square matrices $\A$ and $\B$ with respective eigenvalues $\lambda_1\geq \cdots \geq \lambda_n$ and $\mu_1 \geq \cdots \geq \mu_m$, with $m<n$, we say that the second sequence {\em interlaces} the first one if, for all $i = 1,\ldots,m$, it follows that
$\lambda_i \geq \mu_i \geq \lambda_{n-m+i}$.

\begin{theorem}[Interlacing \cite{f99,h95}]
\label{theo-interlacing}
Let $\S$ be a real $n \times m$ matrix such that $\S^T \S = \I$, and let $\A$ be a $n \times n$ matrix with eigenvalues $\lambda_1 \geq \cdots \geq \lambda_n$. Define $\B = \S^T \A \S$, and call its eigenvalues $\mu_1 \geq\cdots \geq \mu_m$. Then,
\begin{enumerate}[(i)]
\item
The eigenvalues of $\B$ interlace those of $\A$.
\item
If $\mu_i = \lambda_i$ or $\mu_i = \lambda_{n-m+i}$, then there is an eigenvector $\vecv$ of $\B$ for $\mu_i$ such that $\S \vecv$ is eigenvector of $\A$ for $\mu_i$.
\item
If there is an integer $k \in \{0,\ldots,m\}$ such that $\lambda_i = \mu_i$ for $1 \leq i \leq k$,  and $\mu_i = \lambda_{n-m+i}$ for $ k+1 \leq i \leq m$ $(${\em tight interlacing}$)$,  then $\S \B = \A \S$.
\end{enumerate}
\end{theorem}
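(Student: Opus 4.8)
The plan is to prove the three parts separately: (i) and (ii) from the Courant--Fischer min--max description of the eigenvalues of a real symmetric matrix, and (iii) --- which I expect to be the crux --- by reducing to the fact that a tightly interlacing subspace is $\A$-invariant. Throughout write $R_{\A}(\x)=\x^{T}\A\x/\x^{T}\x$ and $R_{\B}(\vecv)=\vecv^{T}\B\vecv/\vecv^{T}\vecv$ for Rayleigh quotients, fix orthonormal eigenbases $\vecu_1,\dots,\vecu_n$ of $\A$ with $\A\vecu_i=\lambda_i\vecu_i$ and $\vecv_1,\dots,\vecv_m$ of $\B$ with $\B\vecv_i=\mu_i\vecv_i$, and record the two facts everything rests on: since $\S^{T}\S=\I$, the map $\S$ is an isometry, so $R_{\B}(\vecv)=R_{\A}(\S\vecv)$ for every $\vecv\ne\vec0$; and $\S$ is injective, so it sends an $i$-dimensional subspace of $\Re^{m}$ onto an $i$-dimensional subspace of $\Re^{n}$, necessarily contained in the column space $\U=\mathrm{col}(\S)$.

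For (i), use $\mu_i=\max_{\dim W=i}\min_{\vec0\ne\vecv\in W}R_{\B}(\vecv)$. For any $i$-dimensional $W\subseteq\Re^{m}$ one has $\min_{\vecv\in W}R_{\B}(\vecv)=\min_{\x\in\S W}R_{\A}(\x)$, and as $W$ runs over the $i$-dimensional subspaces of $\Re^{m}$ the image $\S W$ runs over the $i$-dimensional subspaces of $\U$, a subfamily of all $i$-dimensional subspaces of $\Re^{n}$; hence $\mu_i\le\max_{\dim V=i}\min_{\x\in V}R_{\A}(\x)=\lambda_i$. Replacing $(\A,\B)$ by $(-\A,-\B)$ turns this into $\mu_i\ge\lambda_{n-m+i}$, and the two bounds are the asserted interlacing.

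For (ii), assume $\mu_i=\lambda_i$ (the case $\mu_i=\lambda_{n-m+i}$ reduces to it via $-\A$). The subspace $W^{\ast}=\langle\vecv_1,\dots,\vecv_i\rangle$ realises the outer maximum for $\mu_i$, so $\min_{\x\in\S W^{\ast}}R_{\A}(\x)=\mu_i=\lambda_i$. As $\dim\S W^{\ast}+\dim\langle\vecu_i,\dots,\vecu_n\rangle=i+(n-i+1)>n$, these subspaces meet in a nonzero $\x=\S\vecv$; then $R_{\A}(\x)\ge\lambda_i$ because $\x\in\S W^{\ast}$ and $R_{\A}(\x)\le\lambda_i$ because $\x\in\langle\vecu_i,\dots,\vecu_n\rangle$, so $R_{\A}(\x)=\lambda_i$, and since $\x$ is a combination of the $\vecu_j$ with $\lambda_j\le\lambda_i$ this forces $\A\x=\lambda_i\x$. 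Likewise $R_{\B}(\vecv)=R_{\A}(\x)=\mu_i$ with $\vecv\in\langle\vecv_1,\dots,\vecv_i\rangle$ forces $\B\vecv=\mu_i\vecv$, giving the eigenvector claimed.

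For (iii), note first that $\S\B=\S\S^{T}\A\S$, so $\S\B=\A\S$ is equivalent to $\U=\mathrm{col}(\S)$ being $\A$-invariant. Split $\U$ orthogonally as $\U_1\oplus\U_2$ with $\U_1=\S\langle\vecv_1,\dots,\vecv_k\rangle$ and $\U_2=\S\langle\vecv_{k+1},\dots,\vecv_m\rangle$ (orthogonal because $\S$ is an isometry); in the orthonormal basis $\S\vecv_1,\dots,\S\vecv_k$ of $\U_1$ the compression of $\A$ has $(i,j)$-entry $(\S\vecv_i)^{T}\A(\S\vecv_j)=\vecv_i^{T}\B\vecv_j=\mu_j\delta_{ij}$, hence equals $\diag(\lambda_1,\dots,\lambda_k)$, and likewise the compression of $\A$ to $\U_2$ equals $\diag(\lambda_{n-m+k+1},\dots,\lambda_n)$. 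Thus it suffices to prove: if $\S$ has orthonormal columns and $\S^{T}\A\S$ has eigenvalues exactly $\lambda_1,\dots,\lambda_m$ (the $m$ largest of $\A$), then $\mathrm{col}(\S)$ is $\A$-invariant --- applying this to $\A$ on $\U_1$ and to $-\A$ on $\U_2$ (noting $(-\A)$-invariance is $\A$-invariance) then gives that $\U=\U_1\oplus\U_2$ is $\A$-invariant. For that reduced claim, $\tr(\A\,\S\S^{T})=\tr(\S^{T}\A\S)=\sum_{i=1}^{m}\lambda_i$, which is the maximum of $\tr(\A\M)$ over all $n\times n$ orthogonal projectors $\M$ of rank $m$ (Ky Fan's maximum principle); the maximum is attained only when the range of $\M$ is a sum of top eigenspaces of $\A$, truncated if necessary inside the $\lambda_m$-eigenspace, and every such subspace is $\A$-invariant; hence $\mathrm{col}(\S)=\mathrm{range}(\S\S^{T})$ is $\A$-invariant. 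The delicate point --- and the reason I prefer this trace argument to iterating the computation of (ii) --- is the bookkeeping with eigenvalue multiplicities: part (ii) transfers only a single eigenvector of each $\mu_i$ from $\B$ to $\A$, whereas (iii) needs $\S$ to carry the whole $\mu$-eigenspace of $\B$ into the $\mu$-eigenspace of $\A$, and it is exactly the extremality in Ky Fan's principle that yields this in one stroke. (Equivalently, completing $\S$ to an orthogonal matrix $\Q=[\,\S\ \ \S'\,]$, one is showing that tight interlacing of the principal submatrix $\B$ of $\Q^{T}\A\Q$ forces the complementary off-diagonal block to vanish.)
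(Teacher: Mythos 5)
Your proof is correct, but note that the paper itself gives no proof of this theorem: it is quoted as background and attributed to Fiol \cite{f99} and Haemers \cite{h95}, so the comparison is with the standard proofs in those references (and you should, as you implicitly do, read ``matrix'' as ``real symmetric matrix'', which is the setting in which the result holds and is applied here). Your parts (i) and (ii) are essentially the classical Courant--Fischer/Rayleigh-quotient argument, using that $\S$ is an isometry so $R_{\B}(\vecv)=R_{\A}(\S\vecv)$, plus the dimension-count intersection trick; this is the same route as in \cite{h95}. For (iii) you genuinely depart from the standard argument: Haemers deduces tightness by showing (with some care in the choice of eigenvectors when the $\mu_i$ have multiplicities) that one can pick an entire eigenbasis $\vecv_1,\ldots,\vecv_m$ of $\B$ with each $\S\vecv_i$ an eigenvector of $\A$ for $\mu_i$, whence $\S\B\vecv_i=\mu_i\S\vecv_i=\A\S\vecv_i$ on a basis; you instead reformulate $\S\B=\A\S$ as $\A$-invariance of $\mathrm{col}(\S)$, split $\mathrm{col}(\S)$ along the top and bottom eigenvectors of $\B$, and invoke the equality case of Ky Fan's maximum principle to force each piece to be spanned by extreme eigenspaces of $\A$ (hence invariant). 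This trace-extremality route is valid and, as you observe, neatly sidesteps the multiplicity bookkeeping; its only cost is that you quote rather than prove the equality case of Ky Fan's principle, which itself needs the short argument writing $\tr(\A\M)=\sum_i\lambda_i t_i$ with $t_i=\|\M\vecu_i\|^2\in[0,1]$, $\sum_i t_i=m$, and analyzing when the bound is attained -- worth a line if you want the proof self-contained.
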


Two interesting particular cases where interlacing occurs (obtained by choosing
appropriately the matrix $\S$) are the following. Let $\A$ be the adjacency matrix of a graph $G=(V,E)$. First, if $\B$ is a principal submatrix of $\A$, then $\B$ corresponds to the adjacency matrix of an induced subgraph
$G'$ of $G$. Second, when, for a given partition of the vertices of $\G$, say $V=U_1\cup\cdots\cup U_m$, $\B$ is the so-called {\em quotient matrix} of $\A$, with elements
$b_{ij}$, $i,j=1,\ldots,m$, being the average row sums of the corresponding block $\A_{ij}$ of $\A$. Actually, the quotient matrix $\B$ does not need to be
symmetric or equal to $\S^\top\A\S$, but in this case $\B$ is
similar to (and therefore has the same spectrum as) $\S^\top\A\S$.

\section{The minor polynomials}
In this section we introduce a new class of polynomials, obtained from the different eigenvalues of a graph, which are used later to derive our main results.

Let  $G$ be a $k$-partially walk-regular graph with adjacency matrix $\A$ and  spectrum $\spec G=\{\theta_0,\theta_1^{m_1},\ldots,\theta_d^{m_d}\}$.
Let $p$ be a  polynomial  of degree at most $k$, satisfying $p(\theta_0)=1$ and $p(\theta_i)\ge 0$ for $i=1,\ldots,d$. Then, in Section \ref{main-section} we prove that $G$ has  $k$-independence number satisfying  the bound $\alpha_k\le \tr p_k(\A)=\sum_{i=0}^d m_i p_k(\theta_i)$.
So, the search for the best result motivates the following definition.
\begin{definition}
\label{def-minor-p}
Let $G=(V,E)$ be a graph with  adjacency matrix $\A$ with  spectrum $\spec G=\{\theta_0,\theta_1^{m_1},\ldots,\theta_d^{m_d}\}$.
For a given $k=0,1,\ldots,d$, let us consider the set of real polynomials ${\cal P}_k=\{p\in \R_k:p(\theta_0)=1, p(\theta_i)\ge 0, 1\le i\le d\}$, and the continuous function $\Psi: {\cal P}_k\rightarrow \R^+$ defined by $\Psi(p)=\tr p(\A)$. Then, the $k$-minor polynomial of $G$ is the point
$p_k$ where $\Psi$ attains its minimum:
$$
\tr p_k(\A)=\min\left\{\tr p(\A) : p\in {\cal P}_k \right\}.
$$
\end{definition}

An alternative approach to the $k$-minor polynomials is the following: Let $p_k$ be the polynomial defined by $p_k(\theta_0)=x_0=1$ and $p_k(\theta_i)=x_i$, for $i=1,\ldots,d$,  where the vector $(x_1,x_2,\ldots,x_d)$ is a solution of  the following linear programming problem:

\begin{center}
\frame{
 $\begin{array}{rl}
 & \\
 {\tt minimize} & \sum_{i=0}^d m_ix_i\\
 {\tt with\ constraints} & f[\theta_0,\ldots,\theta_m]=0,\ m=k+1,\ldots,d\\
                       & x_i\ge 0,\ i=1,\ldots,d,\\
                        &
 \end{array}$}
 \end{center}
 where $f[\theta_0,\ldots,\theta_m]$ denote the $m$-th divided differences of Newton interpolation, recursively defined by  $f[\theta_i,\ldots,\theta_j]=\frac{f[\theta_{i+1},\ldots,\theta_j]-f[\theta_i,\ldots,\theta_{j-1}]}
{\theta_j-\theta_{i}}$, where $j>i$, starting with $f[\theta_i]=p(\theta_i)=x_i$, $0\le i\le d$.

Thus, we can easily compute the minor polynomial, for instance by using  the simplex method. Moreover, as the problem is in the so-called standard form, with
$d$ variables, $x_1,\ldots,x_d$, and $d-(k+1)+1=d-k$
equations, the `basic vectors' have at least $d-(d-k)=k$ zeros. Note also that, from the conditions of the programming problem, the $k$-minor polynomial turns out to be of the form
$p_k(x)=f[\theta_0]+f[\theta_0,\theta_1](x-\theta_0)+\cdots +f[\theta_0,\ldots,\theta_k](x-\theta_0)\cdots (x-\theta_{k-1})$, with degree at most $k$. Consequently when we apply the simplex method, we obtain a $k$-minor polynomial $p_k$ with degree $k$ and exactly $k$ zeros at the mesh $\theta_1,\ldots,\theta_d$.
In fact, as shown in the following lemma, a $k$-minor polynomial has exactly $k$ zeros in the interval $[\theta_d,\theta_0)$.

\begin{proposition}
\label{theo:pols}
Let $G$ be a graph with  spectrum $\spec G=\{\theta_0,\theta_1^{m_1},\ldots,\theta_d^{m_d}\}$. Then, for every $k=0,1,\ldots,d$, a $k$-minor polynomial $p_k$ has degree $k$ with its $k$ zeros in $[\theta_d,\theta_0)\subset \Re$.
\end{proposition}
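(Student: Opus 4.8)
The plan is to analyze the linear programming problem that defines $p_k$ and extract structural information about the optimal polynomial, then argue separately that degenerate low-degree solutions cannot occur. Write $p = p_k$ for the optimal polynomial and recall from the discussion preceding the statement that $p$ automatically has the Newton form $p(x)=\sum_{j=0}^{k} f[\theta_0,\dots,\theta_j]\prod_{l=0}^{j-1}(x-\theta_l)$, so $\dgr p \le k$; the constraints $p(\theta_i)=x_i\ge 0$ for $i\ge 1$ and $p(\theta_0)=1$ are in force. The two things to prove are: (a) $p$ has exactly $k$ distinct roots in $[\theta_d,\theta_0)$, and in particular $\dgr p = k$; and (b) all roots lie in that half-open interval (none outside $[\theta_d,\theta_0)$, and $\theta_0$ itself is not a root since $p(\theta_0)=1$).

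First I would establish an upper bound on the number of sign changes / roots via the LP structure. The feasible region is a polyhedron in $\R^d$ cut out by the $d-k$ equality constraints $f[\theta_0,\dots,\theta_m]=0$ ($m=k+1,\dots,d$) together with $x_i\ge 0$. Since $\Psi(p)=\sum m_i x_i$ is linear and bounded below by $0$ on this region, its minimum is attained at a vertex of the polyhedron, i.e. at a basic feasible solution; such a vertex has at least $d-(d-k)=k$ of the inequality constraints tight, meaning at least $k$ of the values $x_1,\dots,x_d$ equal $0$. Hence $p$ vanishes at at least $k$ of the points $\theta_1,\dots,\theta_d$, all of which lie in $[\theta_d,\theta_0)$. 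On the other hand $\dgr p\le k$, so $p$ cannot have more than $k$ roots unless $p\equiv 0$, which is impossible because $p(\theta_0)=1$. Therefore $p$ has exactly $k$ roots, all among $\{\theta_1,\dots,\theta_d\}\subset[\theta_d,\theta_0)$, and consequently $\dgr p = k$. This simultaneously gives (a) and (b).

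The one gap in the argument above that I expect to be the main obstacle is the implicit claim that the LP \emph{attains} its minimum at a vertex — i.e. that $\Psi$ actually has a minimizer and that we may take it to be a basic feasible solution. Boundedness below is clear ($\Psi\ge 0$), and the feasible set is nonempty (e.g. take the polynomial $\prod_{i=1}^{k}\frac{\theta_0-x}{\theta_0-\theta_i}\cdot(\text{nonneg. adjustment})$, or more carefully any element of $\mathcal P_k$; $\mathcal P_k\ne\emptyset$ since, say, a suitable product of squared linear factors rescaled works when $k\ge 2$, and for $k=0,1$ one checks directly). For a standard-form LP that is feasible and bounded, the fundamental theorem of linear programming guarantees an optimal basic feasible solution; I would cite this and note that $\mathcal P_k$ being described by finitely many linear (in)equalities in the unknowns $x_i$ puts us exactly in that setting. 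A second point needing a line of care: the vertex could in principle have \emph{more} than $k$ zeros among the $x_i$, which is fine for the lower bound "at least $k$"; the upper bound "at most $k$" then comes purely from $\dgr p\le k$ and $p\not\equiv 0$, so the two bounds pinch to exactly $k$. Finally, to rule out $\theta_0$ being a root one only needs $p(\theta_0)=1\ne 0$, and to confirm the roots lie in the closed-below, open-above interval one observes the $k$ roots are among $\theta_1>\dots>\theta_d$, all of which are $\ge\theta_d$ and $<\theta_0$. This completes the plan.
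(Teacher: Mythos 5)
Your LP argument is sound as far as it goes, but it proves an existential statement rather than the proposition. The fundamental theorem of linear programming gives you that \emph{some} optimal solution is a basic feasible solution; it does not say that \emph{every} minimizer is a vertex of the feasible polyhedron. The proposition, as stated and as used, concerns an arbitrary $k$-minor polynomial --- by Definition \ref{def-minor-p}, any $p\in{\cal P}_k$ at which $\Psi$ attains its minimum --- and the paper itself exhibits non-vertex minimizers: for an $r$-antipodal distance-regular graph with $d$ odd and $k=d-1$ there are two distinct optimal polynomials $p$ and $q$ vanishing on different parts of the mesh, and every convex combination $\gamma p+(1-\gamma)q$, $\gamma\in(0,1)$, is again a $(d-1)$-minor polynomial, now with a zero \emph{not} belonging to $\{\theta_1,\ldots,\theta_d\}$. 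For such minimizers your conclusion ``all roots lie among $\{\theta_1,\ldots,\theta_d\}$'' is false, and your argument gives no information about where their zeros lie; so the actual content of the proposition --- degree exactly $k$ and all zeros in $[\theta_d,\theta_0)$ for \emph{every} minimizer --- is not established. In fact, the vertex count you use (a basic optimal solution has at least $k$ of the $x_i$ equal to zero, hence $k$ zeros on the mesh) is exactly the observation already made in the discussion preceding the proposition about the simplex method; the proposition is precisely the strengthening from ``$k$ zeros on the mesh, for a suitable minimizer'' to ``$k$ zeros in the interval $[\theta_d,\theta_0)$, for any minimizer''.

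The paper closes this gap with a direct perturbation (exchange) argument applied to an arbitrary minimizer written as $p_k(x)=\prod_{i=1}^r\frac{x-\xi_i}{\theta_0-\xi_i}$ with $r\le k$: if the smallest zero satisfied $\xi_r<\theta_d$, replacing it by the smallest eigenvalue at which $p_k$ does not vanish strictly decreases every value $p_k(\theta_j)$ and hence $\Psi$; if the largest zero were at or beyond $\theta_0$, deleting that factor does the same; and if $r<k$, multiplying by one further factor $\frac{x-\theta_i}{\theta_0-\theta_i}$, with $\theta_i$ the smallest eigenvalue where $p_k$ is positive, again strictly decreases $\Psi$. Each case contradicts optimality, so every minimizer has exactly $k$ zeros, all in $[\theta_d,\theta_0)$. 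If you wish to keep the LP viewpoint, you would still need an argument of this perturbation type (or an analysis of the whole optimal face, not just its vertices) to cover optimal points that are not basic feasible solutions.
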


\begin{proof}
We only need to deal with the case $k\ge 1$.
Assume that a  $k$-minor polynomial $p_k$ has the zeros $\xi_{r}\le \xi_{r-1}\le \cdots \le \xi_{1}$ with $r\le k$. Then, it can be written as $p_k(x)=\prod_{i=1}^r \frac{x-\xi_i}{\theta_0-\xi_i}$. Let us first show that $\xi_r\ge \theta_d$. By contradiction, assume that $\xi_r<\theta_d$, and let $\theta_i$ the smallest eigenvalue which is not a zero of $p_k$ (the existence of such a $\theta_i$ is guaranteed from the condition $r\le k$). Then, the polynomial $q_r(x)=\frac{x-\theta_i}{\theta_0-\theta_i}\prod_{j=1}^{r-1}\frac{x-\xi_i}{\theta_0-\xi_i}$, with degree $r\le k$ satisfies the conditions $q_k(\theta_0)=1$, $q_k(\theta_i)\ge 0$ for $i=1,\ldots,d$, and $\Psi(q_k)<\Psi(p_k)$ since
$\frac{\theta_j-\theta_i}{\theta_0-\theta_i}<\frac{\theta_j-\xi_r}{\theta_0-\xi_r}$ for $j>i$, a contradiction with the fact that $\Psi(p_k)$ is minimum. Second, let us prove, again by contradiction, that $\xi_1>\theta_0$. Otherwise, we could consider the polynomial $q_{k-1}$, with degree $r-1<k$, defined as $q_{k-1}(x)=\prod_{i=2}^r \frac{x-\theta_i}{\theta_0-\theta_i}$ satisfying again $q_{k-1}(\theta_0)=1$, and $q_{k-1}k(\theta_i)\ge 0$ for $i=1,\ldots,d-1$ since $\frac{\theta_i-\xi_1}{\theta_0-\xi_1}>1$ for all $i=1,\ldots,d$. But, from the same inequalities, we also have $\Psi(q_{k-1})<\Psi(p_k)$, a contradiction.

Finally, assume that $r<k$. Since all zeros $\xi_r\le \cdots \le \xi_r$ are in the interval $[\theta_d,\theta_0)$, we can consider again the smallest one $\theta_i$ such that $p_k(\theta_i)>0$. Then, reasoning as before, the polynomial
$q_{r+1}(x)=\frac{x-\theta_i}{\theta_0-\theta_i}\prod_{i=1}^r\frac{x-\xi_i}{\theta_0-\xi}$, with degree $r+1\le k$ leads to the desired contradiction $\Psi(q_{r+1})<\Psi(p_k)$.
\end{proof}

The above results, together with $p_k(\theta_0)=1$ and $p_k(\theta_i)\ge 0$ for $i=1,\ldots,d$ drastically reduce the number of possible candidates for $p_k$. Let us consider some particular values of $k$:

\begin{itemize}
\item
The cases $k=0$ and $k=d$ are easy. Clearly, $p_0=1$, and $p_d$ has zeros at all the points $\theta_i$ for $i\neq 0$. In fact, $p_d=\frac{1}{n}H$, where $H$ is the Hoffman polynomial \cite{hof63}.
\item
For $k=1$, the only zero of $p_1$ must be at $\theta_d$. Hence,
\begin{equation}
\label{p1}
p_1(x)=\frac{x-\theta_d}{\theta_0-\theta_d}.
\end{equation}
Moreover, since $p_1(\theta_i)< 1$ for every $i=1,\ldots,d$, we have that
$$
(1=)\Psi(p_{d})<\Psi(p_{d-1})<\Psi(p_{d-2})<\cdots < \Psi(p_1)<  \Psi(p_0)(=n),
$$
since, for $k=0,\ldots,d-1$, $p_{k+1}(\theta_i)\le p_kp_1(\theta_i)< p_k(\theta_i)$ for every $i=1,\ldots,d$.
\item
For $k=2$, the two zeros of $p_2$ must be at consecutive eigenvalues $\theta_{i}$ and $\theta_{i-1}$. More precisely, the same reasonings used in \cite{acf19} shows that $\theta_i$ must be the largest eigenvalue not greater than $-1$. Then, with these values,
\begin{equation}
\label{p2}
p_2(x)=\frac{(x-\theta_i)(x-\theta_{i-1})}{(\theta_0-\theta_i)(\theta_0-\theta_{i-1})}.
\end{equation}
\item
When $k=3$, the only possible zeros of $p_3$ are $\theta_d$ and the consecutive pair $\theta_i$, $\theta_{i-1}$ for some $i\in [2,d-1]$. In this case, empirical results seem to point out that such a pair must be around the `center' of the mesh (see the examples below).
\item
When $k=d-1$, the polynomial $p_{d-1}$ takes only one non-zero value at the mesh, say at $\theta$, which seems to be located at one of the `extremes' of the mesh. In fact, when  $G$ is an $r$-antipodal distance-regular graph, we show in the last section that either $\theta=\theta_1$ or $\theta=\theta_d$ for odd $d$  yields the tight bound (that is, $r$) for $\alpha_{d-1}$, as does Theorem \ref{thm:fiol}.
Consequently, for such a graph with odd $d$, we have two different $(d-1)$-minor polynomials, say $p$ and $q$, and hence infinitely many $(d-1)$-minor polynomials of the form $r=\gamma p+(1-\gamma)q$ where $\gamma\in [0,1]$. (Notice that, if $\gamma\not\in\{0,1\}$, then $r$ must have some zero not belonging to the mesh $\{\theta_1,\ldots,\theta_d\}$.)
\end{itemize}

Now, let us give all the $k$-minor polynomials, with $k=1,\ldots,d$, for two particular distance-regular graphs. Namely, the Hamming graph $H(2,7)$ and the Johnson graph  $J(14,7)$ (for more details about these graphs, see for instance \cite{bcn89}).
First, we recall that the Hamming graph $H(2,7)$ has spectrum
$$
\spec H(2,7)=\{7^1,5^7,3^{21},1^{35},-1^{35},-3^{21},-5^7,-7^1\}.
$$
Then, the different minor polynomials $p_0,\ldots,p_7$ are shown in Figure \ref{fig1}, and their values $x_i=p_k(\theta_i)$ at the different eigenvalues $\theta_0,\ldots,\theta_7$  are shown in Table \ref{table2}.

\begin{figure}[h!]
\begin{center}
\includegraphics[width=13cm]{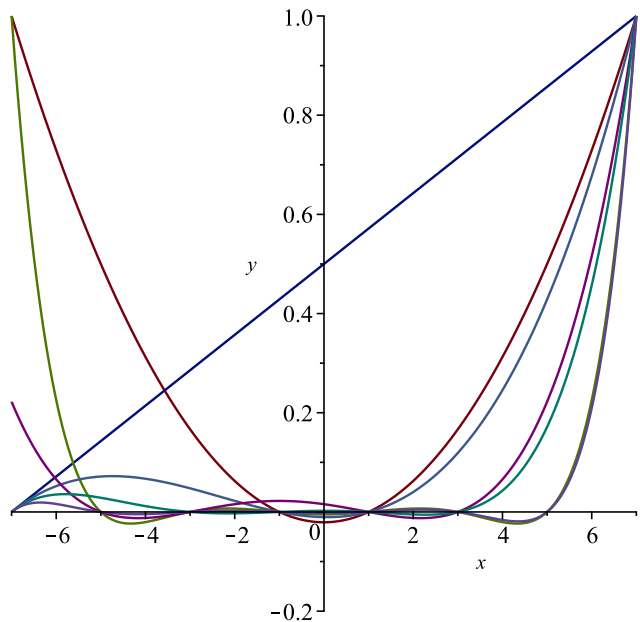}
\vskip-7.5cm
\caption{The polynomials of the Hamming graph $H(2,7)$.}
\label{fig1}
\end{center}
\end{figure}

	\begin{table}[h!]
		\begin{center}
			\begin{tabular}{|c|ccccccc|c|}
				\hline
				$k$ & $x_7$  & $x_6$ & $x_5$ & $x_4$ & $x_3$ & $x_2$ & $x_1$ & $x_0$\\
				\hline
				$1$ & 0 & 1/7 & 2/7 & 3/7 & 4/7 & 5/7 & 6/7 & 1 \\
				\hline
				$2$ & 1 & 1/2 & 1/6 & 0 & 0 & 1/6 & 1/2 & 1 \\
				\hline
				$3$  & 0  & 1/14 & 1/21 & 0 & 0 & 5/42 & 3/7 & 1 \\
				\hline
				$4$  & 2/9 & 0 & 0  & 1/45 & 0 & 0 & 2/9 & 1 \\
				\hline
				$5$ & 0 & 1/35 & 0 & 0 & 0 & 0 & 6/35 & 1 \\
				\hline
				$6$ & 1 & 0 & 0 & 0 & 0 & 0 & 0 & 1 \\
				\hline
				$7$ & 0 & 0 & 0 & 0 & 0 & 0 & 0 & 1 \\
				\hline
			\end{tabular}
		\end{center}
		\caption{Values $x_i=p_k(\theta_i)$ of the $k$-minor polynomials of the Hamming graph $H(2,7)$.}
		\label{table2}
	\end{table}

As another example, consider the
the Johnson graph $J(14,7)$ (see, for instance, \cite{bcn89,g93}). This is an antipodal (but not bipartite) distance-regular graph, with $n=3432$ vertices, diameter $D=7$, and spectrum
$$
\spec J(14,7)=\{49^1, 35^{13}, 23^{77}, 13^{273}, 5^{637}, -1^{1001}, -5^{1001}, -7^{429}\}.
$$
Then the solutions of the linear programming problem are in Table \ref{table3}, which correspond to the minor polynomials shown in Figure \ref{fig2}

\begin{table}[h!]
		\begin{center}
			\begin{tabular}{|c|ccccccc|c|}
				\hline
				$k$ & $x_7$  & $x_6$ & $x_5$ & $x_4$ & $x_3$ & $x_2$ & $x_1$ & $x_0$\\
				\hline
				$1$ & 0 & 1/28 & 3/28 & 3/14 & 5/15 & 15/28 & 3/4 & 1 \\
				\hline
				$2$ & 9/275 & 1/55 & 0 & 0 & 14/275 & 54/275 & 27/55 & 1 \\
				\hline
				$3$  & 0  & 5/1232 & 1/176 & 0 & 0 & 75/1232 & 5/16 & 1 \\
				\hline
				$4$  & 1/1485 & 0 & 0  & 0 & 0 & 14/495 & 2/9 & 1 \\
				\hline
				$5$ & 0 & 1/2860 & 0 & 0 & 0 & 0 & 27/260 & 1 \\
				\hline
				$6$ & 0 & 0 & 0 & 0 & 0 & 0 & 1/13 & 1 \\
				\hline
				$7$ & 0 & 0 & 0 & 0 & 0 & 0 & 0 & 1 \\
				\hline
			\end{tabular}
		\end{center}
		\caption{Values $x_i=p_k(\theta_i)$ of the $k$-minor polynomials of the Johnson graph $J(14,7)$.}
		\label{table3}
	\end{table}

	\begin{figure}[h!]
		\begin{center}
			\includegraphics[width=13cm]{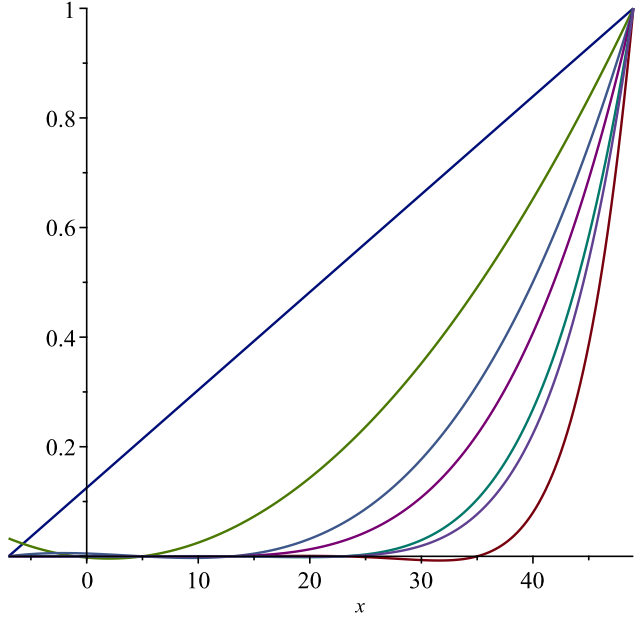}
			\vskip-7.6cm
			\caption{The polynomials of the Johnson graph $J(14,7)$.}
			\label{fig2}
		\end{center}
	\end{figure}

\section{A tight bound for the $k$-independence number}
\label{main-section}
Now we are ready to derive our main result about the $k$-independent number of a $k$-partially walk-regular graph. The proof is based on the interlacing technique.
\begin{theorem}
\label{new-theo}
Let $G$ be a $k$-partially walk-regular graph with $n$ vertices, adjacency matrix $\A$, and spectrum $\spec G=\{\theta_0^{m_0},\ldots,\theta_d^{m_d}\}$.
Let  $p_k\in \Re_k[x]$ be a $k$-minor polynomial. Then, for every $k=0,\ldots,d-1$, the $k$-independence number $\alpha_k$ of $G$ satisfies
\begin{equation}
\label{eq:thm2}
\alpha_k\le \tr p_k(\A)=\sum_{i=0}^d m_i p_k(\theta_i).
\end{equation}
\end{theorem}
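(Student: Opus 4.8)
The plan is to bound a maximum $k$-independent set via eigenvalue interlacing (Theorem~\ref{theo-interlacing}) applied to $\M:=p_k(\A)$ and the vertex partition it induces. So let $U\subseteq V$ be a set of $\alpha_k$ vertices lying at pairwise distance greater than $k$, and consider the partition $V=U\cup(V\setminus U)$.

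First I would record four properties of $\M=p_k(\A)$.
\emph{(1)} Since $\dgr p_k\le k$, $(p_k(\A))_{uv}=0$ whenever $\dist(u,v)>k$: indeed $p_k(\A)$ is a combination of $\I,\A,\dots,\A^k$ and $(\A^\ell)_{uv}=0$ for $\ell<\dist(u,v)$. In particular the principal submatrix of $\M$ indexed by $U$ is diagonal.
\emph{(2)} Since $G$ is $k$-partially walk-regular, $(\A^\ell)_{vv}$ does not depend on $v$ for $\ell\le k$, so $(p_k(\A))_{vv}$ is a constant, equal by taking traces to $w:=\frac1n\tr p_k(\A)$. Hence, by (1), the principal submatrix of $\M$ on $U$ equals $w\I_{\alpha_k}$.
\emph{(3)} Since $G$ is regular (as is any $k$-partially walk-regular graph with $k\ge2$) and $p_k(\theta_0)=1$, we have $p_k(\A)\mathbf 1=p_k(\theta_0)\mathbf 1=\mathbf 1$, so each row sum of $\M$ is $1$, and by symmetry so is each column sum.
\emph{(4)} Since the eigenvalues of $\M$ are $p_k(\theta_0)=1$ and $p_k(\theta_i)\ge0$ $(1\le i\le d)$, we have $\lambda_{\min}(\M)\ge0$.

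Now I would form the quotient matrix $\B$ of $\M$ for $\{U,V\setminus U\}$. By (2) the $U$-block has average row sum $w$; by (3) the total row sums equal $1$, so the $U\times(V\setminus U)$ block has average row sum $1-w$; and if $t$ is the average row sum of the $(V\setminus U)\times U$ block then, interchanging the order of summation and using (2), (3) and symmetry of $\M$, one gets $t=\alpha_k(1-w)/(n-\alpha_k)$. Thus
\[
\B=\begin{pmatrix} w & 1-w\\[2pt] t & 1-t\end{pmatrix},
\]
which has characteristic polynomial $(\lambda-1)\bigl(\lambda-(w-t)\bigr)$, hence eigenvalues $1$ and $w-t$. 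By Theorem~\ref{theo-interlacing}(i) — applied through the remark that the quotient matrix is similar to $\S^\top\M\S$ for a suitable $\S$ with $\S^\top\S=\I$ — these interlace the spectrum of $\M$, so $\min\{1,\,w-t\}\ge\lambda_{\min}(\M)\ge0$ by (4); therefore $w-t\ge0$. Substituting $t$ and clearing the positive factor $n-\alpha_k$ gives $wn\ge\alpha_k$, that is,
\[
\alpha_k\ \le\ wn\ =\ \tr p_k(\A)\ =\ \sum_{i=0}^d m_i p_k(\theta_i).
\]
(The case $k=0$ is trivial: $p_0=1$ and $\alpha_0=n=\tr\I$. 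For $1\le k\le d-1$ the graph has an edge, so $\alpha_k<n$ and the division above is valid. Note also that the minimality of $\tr p_k(\A)$ over ${\cal P}_k$ is irrelevant for the inequality: the bound holds for every $p\in{\cal P}_k$, the $k$-minor polynomial merely being the optimal choice.)

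I expect the crux to be step (2): recognizing the $U$-block of $p_k(\A)$ as $w\I$ is the only place the hypotheses enter — $\dgr p_k\le k$ annihilates the entries between vertices of $U$, and $k$-partial walk-regularity makes the diagonal constant — and everything afterwards is a $2\times2$ manipulation. One point needing a little care is to invoke interlacing only in the form ``$\mu_2\ge\lambda_{\min}(\M)$'', without presupposing which of $1,w-t$ is the larger eigenvalue of $\B$; this is harmless, since if $w-t\ge1$ then $w-t\ge0$ trivially, while otherwise $w-t=\min\{1,w-t\}\ge0$. (Alternatively one can avoid interlacing entirely: $p_k(\A)\succeq\frac1n\J$ because $p_k(\theta_0)=1$ and $p_k(\theta_i)\ge0$, and restricting this inequality to the principal submatrix on $U$ gives $w\I_{\alpha_k}\succeq\frac1n\J_{\alpha_k}$, whence evaluating the quadratic form at $\mathbf 1$ yields $\alpha_k\le wn$.)
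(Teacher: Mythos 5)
Your proposal is correct and follows essentially the same route as the paper: partition $V$ by a maximum $k$-independent set $U$, use that $k$-partial walk-regularity plus $\dgr p_k\le k$ makes the $U$-block of $p_k(\A)$ equal to $w\I$ with $w=\frac1n\tr p_k(\A)$, form the $2\times 2$ quotient matrix with eigenvalues $1$ and $w-t$, and conclude by interlacing against $\lambda_{\min}(p_k(\A))\ge 0$, which is exactly the paper's $0\le\mu_2$ step yielding $\alpha_k\le n\,w(p_k)$. Your added remarks (explicit use of regularity for the row sums, and the alternative positive-semidefiniteness argument $p_k(\A)\succeq\frac1n\J$) are sound refinements but do not change the argument in substance.
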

\begin{proof}
 Let $U$ be a $k$-independent set of $G$ with $r=|U|=\alpha_k(G)$ vertices. Again, assume the first columns (and rows) of $\A$ correspond to the vertices in $U$. Consider the partition of said columns according to $U$ and its complement. Let $\S$ be the normalized characteristic matrix of this partition. The quotient matrix of $p(\A)$ with regards to this partition is given by
\begin{align}
\label{B_k=2}
\S^T p(\A) \S = \B_k & =
\left(
\begin{array}{cc}
\frac{1}{r}\sum_{u\in U}(p_k(\A))_{uu} & p_k(\theta_0)-\frac{1}{r}\sum_{u\in U}(p_k(\A))_{uu}\\
\frac{r p_k(\theta_0)-\sum_{u\in U}(p(\A))_{uu}}{n-r}  & p_k(\theta_0)-\frac{r p_k(\theta_0)-\sum_{u\in U}(p(\A))_{uu}}{n-r}
\end{array}
\right)\\
 &=\left(
 \begin{array}{cc}
 \frac{1}{n}\sum_{i=0}^d m_i p_k(\theta_i) & 1-\frac{1}{n}\sum_{i=0}^d m_i p_k(\theta_i)\\
 \frac{r-\frac{r}{n}\sum_{i=0}^d m_i p_k(\theta_i)}{n-r}  & 1-\frac{r -\frac{r}{n}\sum_{i=0}^d m_i p_k(\theta_i)}{n-r}
 \end{array}
 \right),
\end{align}
with eigenvalues $\mu_1=p(\lambda_1)=1$ and
$$
\mu_2=\tr \B_k-1=w(p_k)-\frac{r -rw(p_k)}{n-r}.
$$
where $w(p_k)=\frac{1}{n}\sum_{i=0}^d m_i p_k(\theta_i)$.
Then, by interlacing, we have
\begin{equation}\frac{{2\ell\choose \ell}}{2(\ell+1)}\frac{{2\ell\choose \ell}}{2(\ell+1)}\frac{{2\ell\choose \ell}}{2(\ell+1)}
\label{interlacing:theo2}
0\le \mu_2\le w(p_k)-\frac{r-rw(p_k)}{n-r},
\end{equation}
whence, solving for $r$, we get $r\le nw(p_k)$ and the result follows.
\end{proof}

As mentioned in the previous section, notice that, in fact, the proof works for any polynomial $p$ satisfying $p(\theta_0)=1$ and $p(\theta_i)\ge 0$ for $i=1,\ldots,d$. By way of example, if $G$ is a distance-regular graph with distance polynomials $p_0,\ldots,p_d$, we could take $p(x)=\frac{q_k^2(x)}{q_k^2(\theta_0)}$, with degree $2k$, where the sum polynomial $q_k=p_0+\cdots +p_k$ satisfies
$\|q_k\|_G^2=q_k(\theta_0)$. Now, recall that $q_k(\theta_0)=n_{k}$ corresponds to   the number  of vertices at distance $\le k$ from any vertex of $G$ (see, for instance Biggs \cite{biggs}). 
Thus, we obtain
$$
\alpha_{2k}\le \Psi(p)=\sum_{i=0}^d m_i\frac{q_k^2(\theta_i)}{q_k^2(\theta_0)}=\frac{n}{q_k^2(\theta_0)}\|q_k\|_G^2=\frac{n}{n_k},
$$
as expected. 

Another possibility is to use the polynomial $p(x)=\frac{P_k(x)+1}{P_k(\theta_0)+1}$, where $P_k$ is the $k$-alternating polynomial.
In this case, when $G$ is an $r$-antipodal distance-regular graphs and $k=d-1$, it turns out that the $d$-distance polynomial is
$p_d=H-\frac{r}{2}P_{d-1}+\frac{r}{2}-1$, where $H$ is the Hoffman polynomial (see \cite{Fiolkindep}). Then, we get
$\Psi(p)=\frac{2n}{P_{d-1}(\theta_0)+1}$, which coincides with the bound for $\alpha_{d-1}$ given in Theorem  \ref{thm:fiol}.

Let us now consider some particular cases of  Theorem \ref{new-theo} by using the minor polynomials.

\subsubsection*{The case $k=1$.}
As mentioned above, $\alpha_1$ coincides with the standard independence number $\alpha$. In this case the minor polynomial is
 $p_1(x)=\frac{x-\theta_d}{\theta_0-\theta_d}$. Then, \eqref{eq:thm2} gives
\begin{equation}
\label{eq-k=1}
\alpha_1=\alpha\le \tr p_1(\A)=\frac{-n\theta_d}{\theta_0-\theta_d},
\end{equation}
which is Hoffman's bound in Theorem \ref{thm:hoffman}.

\subsubsection*{The case $k=2$.}
We already stated that $p_2(x)=\frac{(x-\theta_i)(x-\theta_{i-1})}{(\theta_0-\theta_i)(\theta_0-\theta_{i-1})}$.
Then, \eqref{eq:thm2} yields
\begin{equation}
\label{eq-k=2}
\alpha_2\le \tr p_2(\A)=n\frac{\theta_0+\theta_i\theta_{i-1}}{(\theta_0-\theta_i)
(\theta_0-\theta_{i-1})},
\end{equation}
in agreement with the result  of \cite{acf19} (here in Theorem \label{theo-gen-k}$(i)$). Moreover, in the same paper, two infinite families of (distance-regular) graphs where the bound \eqref{eq-k=2} is tight were provided.

\subsection*{Some examples}
To compare the above bounds with those  obtained in \cite{Fiolkindep} and \cite{act16} (here in Theorems \ref{thm:fiol} and \ref{thm:abiad}, respectively), let us consider again the Hamming graph $H(2,7)$ and the Johnson graph $J(14,7)$. Thus, in Table \ref{table4} we show the bounds obtained for $\alpha_k(H(2,7))$, whereas those of  $\alpha_k(J(14,7))$ are shown in Table \ref{table5}. (Recall that every distance-regular graph is also walk-regular.)
\begin{table}[h!]
\begin{center}
\begin{tabular}{|c|ccccccc| }
\hline\hline
$k$ & $1$ & $2$ & $3$  & $4$ & $5$ & $6$ & $7$ \\
\hline
Bound from Theorem \ref{thm:fiol} & 109 & 72 & 36  & 19 & 7 & 2 & -- \\
\hline
Bound from Theorem \ref{thm:abiad} ($k> 2$) & - & - & 65 & 67  & 64 & 65 & 64 \\
\hline
Bound from Theorem \ref{theo-gen-k}$(i)$-$(iii)$ & - & 21 & 56 & 6  & 55 & 3 & 55 \\
\hline
Bound from Theorem \ref{new-theo} &
				\bf 64 & \bf 16 & \bf 8 & \bf 3 & \bf 2 & \bf 2  & \bf 1 \\
				\hline\hline
			\end{tabular}
		\end{center}
		\caption{Comparison of the bounds for $\alpha_k$ in the Hamming graph $H(2,7)$.}
		\label{table4}
	\end{table}
Note that, in general, the bounds obtained by Theorem \ref{new-theo} constitute a significant improvement with respect to those in \cite{Fiolkindep,act16}. In particular, the bounds for $k=6,7$ are equal to the correct values $\alpha_6=2$ (since both graphs are $2$-antipodal), and $\alpha_7=1$ (since their diameter is $D=7$). Besides notice that, in the case of the Hamming graph, $\alpha_2=16$ since it contains the perfect Hamming code $H(7,4)$.

\begin{table}[h!]
\begin{center}
\begin{tabular}{|c|ccccc| }
\hline\hline
$k$ & $3$  & $4$ & $5$ & $6$ & $7$ \\
\hline
Bound from Theorem \ref{thm:fiol} & 464 & 125 & 20 & 2  & -- \\
\hline
Bound from Theorem \ref{thm:abiad} & 935 & 721 & 546 & 408  & 302 \\
\hline
Bound from Theorem \ref{theo-gen-k}$(ii)$-$(iii)$ & 26 & 10 & 5 & 3  & 2 \\
\hline
Bound from Theorem \ref{theo-gen-k}$(iv)$ & 80 & 86 & 25 & 2  & 1 \\
\hline
Bound from Theorem \ref{new-theo} & \bf 19 & \bf 6 & \bf 2 & \bf 2  & \bf 1 \\
\hline\hline
\end{tabular}
\end{center}
\caption{Comparison of bounds for $\alpha_k$ in the Johnson graph $J(14,7)$.}
\label{table5}
\end{table}

\subsection{Antipodal distance-regular graphs}
\label{antipodal}
Finally, we consider an infinite family where our bound for $\alpha_{d-1}$ is tight.
With this aim, we assume that the minor polynomial takes non-zero value only at $\theta_1$.
Thus, $p_{d-1}(x)=\frac{1}{\prod_{i=2}^d (\theta_0-\theta_i)}\prod_{i=2}^d (x-\theta_i)$.
Then, the bound \eqref{eq:thm2} of Theorem \ref{new-theo} is
$$
\sum_{i=0}^d m_i p_{d-1}(\theta_i)=m_0p_{d-1}(\theta_0)+m_{1}p_{d-1}(\theta_1)=1+m_1\frac{\prod_{i=2}^d (\theta_1-\theta_i)}{\prod_{i=2}^d (\theta_0-\theta_i)}=1+m_1\frac{\pi_1}{\pi_0}
$$
where, in general, $\pi_i=\prod_{j=0,j\neq i}|\theta_i-\theta_j|$ for $i\in [0,d]$.
Now suppose that $G$ is an $r$-antipodal distance-regular graph.
Then, in  \cite{Fiolkindep} it was shown that $G$ is so
if and only if its eigenvalue multiplicities are
$m_i=\pi_0/\pi_i$ for $i$ even, and $m_i=(r-1)\pi_0/\pi_i$ for $i$ odd.
So, with $m_1=(r-1)\pi_0/\pi_1$, we get
$$
\alpha_{d-1}\le 1+m_1\frac{\pi_1}{\pi_0}=r,
$$
which is the correct value.

When $G$  is an $r$-antipodal distance-regular graph with odd $d$, we can also consider the minor polynomial $q_{d-1}$ which takes non-zero value only at $\theta_d$, that is  $q_{d-1}(x)=\frac{1}{\prod_{i=1}^{d-1} (\theta_0-\theta_i)}\prod_{i=1}^{d-1} (x-\theta_i)$. Then, reasoning as above,
we get again the tight bound $\Psi(q_{d-1})=1+m_d\frac{\pi_d}{\pi_0}=r$.

\subsection{Odd graphs}
For every integer $\ell\ge 2$, the  odd graphs $O_\ell$ constitute a well-known family of distance-regular graphs with interactions between graph theory and other areas of combinatorics, such as coding theory and design theory.
The vertices of $O_\ell$ correspond to the $(\ell-1)$-subsets of a $(2\ell-1)$-set, and adjacency is defined by void intersection. In particular, $O_3$ is the Petersen graph. In general, the odd $O_\ell$ is a $\ell$-regular graph with order
$n={2\ell-1\choose \ell-1}=\frac{1}{2}{2\ell\choose \ell}$, diameter $\ell-1$, and its eigenvalues and multiplicities are $\theta_i=(-1)^i (\ell-i)$ and
$m(\theta_i)={2\ell-1\choose i}-{2\ell-1\choose i-1}$ for $i=0,1,\ldots,\ell-1$.  For more details, see for instance, Biggs \cite{biggs} and Godsil  \cite{g93}.

In Table \ref{table5}  we show the bounds of the $k$-independence numbers for $O_{\ell}$, $\ell=2,3,4,5$ given by Theorem \ref{new-theo}. The numbers in bold faces, $7$ and $66$, correspond to the known $1$-perfect codes in $O_4$ and $O_6$, respectively.
\begin{table}[h!]
\begin{center}
\begin{tabular}{|c|cccc| }
\hline\hline
graph / $k$ & $2$  & $3$ & $4$ & $5$  \\
\hline
$O_4$ & \bf 7 & -- & -- &  --   \\
\hline
$O_5$ & 13 & 9 &  -- &  --   \\
\hline
$O_6$ & \bf 66 & 21  &  11 &  --  \\
\hline
$O_7$ & 158   & 90  &  17 &  12  \\
\hline\hline
\end{tabular}
\end{center}
\caption{Some bounds for $\alpha_k$ in odd graphs $O_{\ell}$ for $\ell=4,5,6,7$.}
\label{table5}
\end{table}

More generally,  \eqref{eq-k=1} and \eqref{eq-k=2} allow us to compute the bounds for $\alpha_1$ and $\alpha_2$ of every odd graph $O_{\ell}$, which turn out to be
\begin{align}
\alpha_1 & \le \frac{{2\ell\choose \ell}(\ell-1)}{4\ell-2}\sim \frac{2^{2\ell-2}}{\ell^{\frac{1}{2}}\sqrt{\pi}}, \label{alpha1odd}\\
\alpha_2 & \le \frac{{2\ell\choose \ell}(\ell-2)}{2(\ell+(-1)^\ell(\ell-2(-1^\ell))}\sim \frac{2^{2\ell-1}}{\ell^{\frac{3}{2}}\sqrt{\pi}}, \label{alpha2odd}
\end{align}
where we have indicated their asymptotic behaviour, when $\ell\rightarrow \infty$, by using the Stirling's formula. As a consequence, we have 
the known result that, when $\ell$ is odd, the odd graph $O_{\ell}$ has no $1$-perfect code.
Indeed, the existence of $1$-perfect code in $O_{\ell}$ requires that $\alpha_2=\frac{n}{\ell+1}=\frac{{2\ell\choose \ell}}{2(\ell+1)}$ (since all codewords must be mutually at distance $\ge 3$). However, when $\ell$ is odd, \eqref{alpha2odd} gives
$\alpha_2\le \frac{{2\ell\choose \ell}(\ell-2)}{2(\ell-1)(\ell+2)}<\frac{{2\ell\choose \ell}}{2(\ell+1)}$, a contradiction. (In fact, when $n$ is a power of two minus one $\frac{{2\ell\choose \ell}}{2(\ell+1)}$ is not an integer, which also prevents the existence of  a $1$-perfect code.)
Note that this result is in agreement with the fact that  a necessary condition for a regular graph to have a  $1$-perfect  code is the exitence of the eigenvalue $-1$, which is not present in $O_{\ell}$ when $\ell$ is odd (see Godsil \cite{g93}).

Finally, by using the same polynomial as in Subsection \ref{antipodal}, we have that the $(d-1)$-independence number of $O_{\ell}$, where $d-1=\ell-2$,  satisfies the bounds
$$
\alpha_{\ell-2}\le 1+m_1\frac{\pi_1}{\pi_0}=
\left\{ 
\begin{array}{ll}
2\ell-1, & \mbox{$\ell$ even},\\
2\ell-2, & \mbox{$\ell$ odd}.
\end{array}\right.
$$
For instance, for the Petersen graph $P=O_3$, this yields $\alpha_1\le 4$, as it is well-known.

\subsection*{Acknowledgments}
This research has been partially supported by AGAUR from the Catalan Government under project 2017SGR1087, and by MICINN from the Spanish Government under project PGC2018-095471-B-I00.


\end{document}